\newtheorem*{thm*}{Theorem}
\newtheorem*{conj*}{Conjecture}
\newtheorem{theorem}{Theorem}
\newtheorem*{remark}{Remark}
\newtheorem*{TwoRemarks}{Two Remarks}
\newtheorem*{example}{Example}
\newtheorem{corollary}[theorem]{Corollary}
\newcommand{\Z}{\mathbb{Z}}
\newcommand{\Q}{\mathbb{Q}}
\newcommand{\R}{\mathbb{R}}
\newcommand{\C}{\mathbb{C}}
\newcommand{\im}{{\text {\rm Im}}}
\numberwithin{equation}{section}
\begin{document}
\title{The Eichler integral of $E_2$ and $q$-brackets of $t$-hook functions}
\author{Ken Ono}
\address{Department of Mathematics, University of Virginia, Charlottesville, VA 22904}
\email{ko5wk@virginia.edu}

\thanks{The author thanks the support of the Thomas Jefferson Fund and the NSF
(DMS-1601306).}
\keywords{$t$-hooks, partitions, $q$-brackets, Eichler integrals}

\begin{abstract}
For functions $f: \mathcal{P}\mapsto \C$ on partitions, Bloch and Okounkov defined a 
power series $\langle f\rangle_q$ that is the ``weighted average'' of $f.$
As Fourier series in $q=e^{2\pi i z}$, such $q$-brackets generate the ring of quasimodular forms, and
the modular forms that are powers of Dedekind's eta-function. Using work of Berndt and Han, we build
modular objects from
$$
f_t(\lambda):= t\sum_{h\in \mathcal{H}_t(\lambda)}\frac{1}{h^2},
$$
weighted sums  over partition hook numbers that are multiples of $t$.
We find that $\langle f_t \rangle_q$ is the Eichler integral
of $(1-E_2(tz))/24,$ which we modify to construct a function $M_t(z)$ that enjoys weight 0 modularity properties.
As a consequence,
the non-modular Fourier series
 $$H_t^*(z):=\sum_{\lambda \in \mathcal{P}} f_t(\lambda)q^{|\lambda|-\frac{1}{24}}
 $$
inherits weight $-1/2$ modularity properties. These are sufficient to imply a Chowla-Selberg type result, generalizing the fact that weight $k$ algebraic modular forms evaluated at discriminant $D<0$ points $\tau$ are algebraic multiples of   $\Omega_D^k,$ the $k$th power of the canonical period.  If we let
$\Psi(\tau):=-\pi i \left(\frac{\tau^2-3\tau+1}{12\tau}\right)-\frac{\log(\tau)}{2},$ then for $t=1$ we prove that
$$
H_1^*(-1/\tau)-\frac{1}{\sqrt{-i\tau}}\cdot H_1^*(\tau)\in \overline{\Q}\cdot \frac{\Psi(\tau)}{\sqrt{\Omega_D}}.
$$
\end{abstract}
\dedicatory{For my friend Lance Littlejohn in celebration of his distinguished career}
\maketitle
\section{Introduction and statement of results}

A {\it partition} of a non-negative integer $n$ is any nonincreasing sequence of positive integers which sum to $n$. 
As usual, if $\lambda=\lambda_1+\lambda_2+\dots+\lambda_s$ is a partition of size $|\lambda|=n,$ then
we associate the {\it Ferrers-Young} diagram
$$
\begin{matrix}
\bullet & \bullet & \bullet & \dots \bullet &\ \leftarrow  \ {\text {\rm $\lambda_1$ many nodes}}\\
\bullet & \bullet &\dots &\bullet & \ \leftarrow \ {\text {\rm $\lambda_2$ many nodes}}\\
\vdots & \vdots & \vdots & \ & \ &  \\
\bullet & \dots & \bullet & \ & \   \leftarrow \ {\text {\rm $\lambda_s$ many nodes}}.
\end{matrix}
$$
Each node has a {\it hook number}. For a node in row $i$ and column $j,$ it 
is the positive integer $h(i,j):=(\lambda_i-i)+(\lambda'_j-j)+1,$ where $\lambda'_j$ denotes the number of nodes in column $j$.

Ferrers-Young diagrams and their hook numbers play significant roles in representation theory (for example, see \cite{KerberJames}). Indeed, partitions of size $n$ are used
to define {\it Young tableaux}, and their combinatorial properties encode the representation theory of the symmetric group $S_n$.
For example, the $t$-{\it core partitions} of size $n$ play an important role in number theory (for example, see \cite{G, GranvilleOno}) and the modular representation theory of $S_n$ and $A_n$ (for example, see
Chapter 2 of \cite{KerberJames}, and \cite{FongSrinivasan, GranvilleOno}). Recall that a partition is a $t$-core if none of its hook numbers 
 are multiples of $t$. If $p$ is prime, then the existence of a $p$-core of size $n$ is equivalent to the existence
of a defect 0 $p$-block for both $S_n$ and $A_n$. 

In this note we make use of partitions that are not $t$-core, namely those partitions that have some hook numbers that are multiples of  $t.$ We shall use these partitions to
 define partition functions $f_t : \mathcal{P}\mapsto \Q,$ which in turn we use to define
new modular objects.

To make this precise, we recall the framework of $q$-brackets.
For functions $f: \mathcal{P}\mapsto \C$ on the integer partitions, Bloch and Okounkov \cite{BlochOkounkov} defined the formal
power series 
\begin{equation}
\langle f\rangle_q:=\frac{\sum_{\lambda\in \mathcal{P}} f(\lambda)q^{|\lambda|}}{\sum_{\lambda\in \mathcal{P}}q^{|\lambda|}} \in \C[[q]],
\end{equation}
 which can conceptually be thought of as the ``weighted average'' of $f.$
 Schneider \cite{Schneider} developed a ``multiplicative theory of partitions'' based on these $q$-brackets,
 which includes partition theoretic analogs of many constructions in classical multiplicative number theory, such as M\"obius inversion,
 Dirichlet convolution, as well as incarnations of sieve methods.
 
Viewed as Fourier expansions, Bloch and Okounkov \cite{BlochOkounkov} showed that the ring of quasimodular forms is generated by the $q$-brackets of distinguished  functions $f$ associated to
shifted symmetric polynomials, work which was later expanded and refined by Zagier \cite{Zagier}.  Recently, Griffin, Jameson, and Trebat-Leder \cite{GJTL} developed a theory of $p$-adic modular forms in the context of these specific $q$-brackets for these shifted symmetric polynomials.

Nekrasov and Okounkov
\cite{NekrasovOkounkov} and Westbury \cite{Westbury}\footnote{Westbury discovered (see Prop. 6.1 and 6.2 of \cite{Westbury}) the Nekrasov-Okounkov formula concurrently.} later defined
further functions, say $D_s$, formulated in terms of partition hook numbers. They used these functions to give a partition theoretic description of every power (including complex) of Dedekind's eta-function $\eta(z):=q^{\frac{1}{24}}\prod_{n=1}^{\infty}(1-q^n).$ 
For $s\in \C$, define the function $D_s: \mathcal{P}\mapsto \C$ by
$$
D_s(\lambda):=\prod_{h\in \mathcal{H(\lambda)}} \left(1-\frac{s}{h^2}\right),
$$
where $\mathcal{H}(\lambda)$ denotes the multiset of hook numbers of the partition $\lambda$.
A slight reformulation of their formula (see (6.12) of \cite{NekrasovOkounkov}), using Euler's partition generating function 
\begin{equation}\label{Euler}
\sum_{n=0}^{\infty}p(n)q^n=\sum_{\lambda \in \mathcal{P}}q^{|\lambda|}=\prod_{n=1}^{\infty}\frac{1}{(1-q^n)},
\end{equation}
 asserts that
$q^{\frac{s}{24}}\cdot \langle D_s \rangle_q= \eta(z)^s.$
If $s\in \Z,$ then we have
a weight $s/2$ modular form (see Chapter 1 of \cite{CBMS}), thereby encoding a particularly important family of modular forms.

In view of these constructions of quasimodular forms and powers of Dedekind's eta-function, it is natural to ask
whether further modular objects arise from $q$-brackets. To this end, we recall one of the simplest $q$-brackets, the formal
power series associated to  $f(\lambda):=|\lambda|,$ the ``size'' function. A simple calculation using (\ref{Euler})
shows that
\begin{equation}\label{simple}
\langle |\cdot|\rangle_q=\frac{\sum_{\lambda\in \mathcal{P}} |\lambda|q^{|\lambda|}}{\sum_{\lambda\in \mathcal{P}}q^{|\lambda|}}=
\sum_{n=1}^{\infty}\sigma_1(n)q^n=\frac{1-E_2(z)}{24},
\end{equation}
where $\sigma_{v}(n):=\sum_{1\leq d\mid n}d^v$, and $E_2(z)=1-24\sum_{n=1}^{\infty}\sigma_1(n)q^n$ is the weight 2 quasimodular Eisenstein series. Although $E_2(z)$ is not a modular form, it is well known that
$$
E_2^*(z):=-\frac{3}{\pi\cdot \im(z)}+E_2(z)
$$
is a non-holomorphic weight 2 modular form (for example, see Chapter 6 of \cite{BFOR}). Therefore, its {\it modified
Eichler integral}
\begin{equation}\label{E}
\mathcal{E}(z):=\sum_{n=1}^{\infty}\frac{\sigma_1(n)}{n}\cdot q^n=\sum_{n=1}^{\infty}\sigma_{-1}(n)q^n
\end{equation}
enjoys certain weight 0 modularity properties. These were determined by Berndt in the 1970s \cite{Berndt}.
Therefore, in view of (\ref{simple}), it is natural to ask whether $\mathcal{E}(z)$ arises as a $q$-bracket. 

We show that this is indeed the case, and the construction makes use of $t$-hooks, the hook numbers which are
multiples of $t$. To this end, for each  $t\in \Z^{+}$ we define $f_t: \mathcal{P}\mapsto \Q$ by
\begin{equation}
f_t(\lambda):=t \sum_{h\in \mathcal{H}_t(\lambda)}\frac{1}{h^2},
\end{equation}
where $\mathcal{H}_t(\lambda)$ is the multiset of hook numbers which are multiples of $t$. 

\begin{example} We consider the partition $\lambda=4+3+1,$
which has Ferrers-Young diagram
$$
\begin{matrix} \bullet_6 & \bullet_4 & \bullet_3 & \bullet_1 \\
           \bullet_4 & \bullet_2 & \bullet_1\\
           \bullet_1 
           \end{matrix}
$$         
(the subscripts denote the the hook numbers). We find that
$\mathcal{H}(\lambda)=\{1,1,1,2,3,4,4,6\},$ $\mathcal{H}_2(\lambda)=\{2,4,4,6\},$ 
and $\mathcal{H}_3(\lambda)=\{3,6\}.$ Therefore, we find that
\begin{displaymath}
\begin{split}
f_1(\lambda)&= 1+1+1+\frac{1}{4}+\frac{1}{9}+\frac{1}{16}+\frac{1}{16}+\frac{1}{36}=\frac{253}{72},\\
f_2(\lambda)&=2\left(\frac{1}{4}+\frac{1}{16}+\frac{1}{16}+\frac{1}{36}\right)=\frac{29}{36},\\
f_3(\lambda)&=3\left(\frac{1}{9}+\frac{1}{36}\right)=\frac{5}{12}.
\end{split}
\end{displaymath}
\end{example} 

For convenience, we define
\begin{equation}\label{Ht}
H_t(z):=\sum_{\lambda\in \mathcal{P}} f_t(\lambda)q^{|\lambda|}.
\end{equation}
Work of Han \cite{Han} allows us to describe the $q$-brackets of the $f_t$ in terms 
of $\mathcal{E}(z).$

\begin{theorem}\label{Theorem1}
If $t$ is a positive integer, then we have
$$
\langle f_t\rangle_q=\prod_{n=1}^{\infty} (1-q^n)\cdot H_t(z)=\mathcal{E}(tz).
$$
\end{theorem}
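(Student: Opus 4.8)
The plan is to dispatch the first equality at once and reduce the second to a single differentiation of a hook-length generating function. The identity $\langle f_t\rangle_q=\prod_{n\ge 1}(1-q^n)\cdot H_t(z)$ is merely the definition of the $q$-bracket combined with Euler's product (\ref{Euler}): the denominator $\sum_{\lambda}q^{|\lambda|}$ equals $\prod_{n\ge1}(1-q^n)^{-1}$, and the numerator is $H_t(z)$ by (\ref{Ht}). Thus all of the content lies in proving
$$\prod_{n\ge 1}(1-q^n)\cdot H_t(z)=\mathcal{E}(tz),\qquad\text{equivalently}\qquad H_t(z)=\mathcal{E}(tz)\cdot\prod_{n\ge 1}\frac{1}{1-q^n}.$$

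The main input I would use is Han's $t$-refinement \cite{Han} of the Nekrasov--Okounkov formula, namely the formal power series identity
$$\sum_{\lambda\in\mathcal{P}} q^{|\lambda|}\prod_{h\in\mathcal{H}_t(\lambda)}\left(1-\frac{s}{h^2}\right)=\prod_{n\ge 1}\frac{(1-q^{tn})^{s/t}}{1-q^n}.$$
Conceptually this comes from the $t$-core/$t$-quotient decomposition: the multiset $\mathcal{H}_t(\lambda)$ of hooks divisible by $t$ is, after dividing each entry by $t$, exactly the union of all hook multisets of the $t$-quotient $(\lambda^{(0)},\dots,\lambda^{(t-1)})$, while $|\lambda|$ splits as $|\lambda_{(t)}|+t\sum_i|\lambda^{(i)}|$. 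Factoring the sum over the core and the $t$ quotient components, applying the ordinary Nekrasov--Okounkov formula $\sum_\mu x^{|\mu|}\prod_{h'}\left(1-\frac{s/t^2}{h'^2}\right)=\prod_n(1-x^n)^{s/t^2-1}$ to each component with $x=q^t$, and using the $t$-core generating function $\prod_n (1-q^{tn})^t/(1-q^n)$, the exponents collapse to $s/t$. At $t=1$ this recovers the formula $\sum_\lambda D_s(\lambda)q^{|\lambda|}=\prod_n(1-q^n)^{s-1}$ recorded in the introduction, which is a useful sanity check.

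The key step is then logarithmic differentiation in $s$ at $s=0$. On the left, $\frac{d}{ds}\big|_{s=0}\prod_{h\in\mathcal{H}_t(\lambda)}(1-s/h^2)=-\sum_{h\in\mathcal{H}_t(\lambda)} h^{-2}$, so multiplying by $-t$ reproduces $f_t(\lambda)$ coefficientwise and gives $H_t(z)=-t\,\frac{d}{ds}\big|_{s=0}$ of the right-hand product. Writing that product as $\prod_n(1-q^n)^{-1}\cdot\exp\!\big(\tfrac{s}{t}\sum_n\log(1-q^{tn})\big)$ and differentiating yields $-\tfrac1t\,\mathcal{E}(tz)\prod_n(1-q^n)^{-1}$, once we note the expansion $\sum_{n\ge1}\log(1-q^{tn})=-\sum_{N\ge1}\sigma_{-1}(N)q^{tN}=-\mathcal{E}(tz)$, which uses $\sum_{m\mid N}1/m=\sigma_{1}(N)/N=\sigma_{-1}(N)$. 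The factors of $t$ and the signs cancel to give exactly $H_t(z)=\mathcal{E}(tz)\prod_n(1-q^n)^{-1}$, as required.

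The step I expect to require the most care is making the $s$-differentiation rigorous at the level of formal power series: one must check that for each fixed power $q^N$ only finitely many partitions contribute, so that the coefficientwise derivative in $s$ is legitimate and commutes with the $q$-expansion, and that Han's identity is genuinely valid in $\C[[q]]$ rather than only analytically for $|q|<1$. Equally, I would want to confirm that the cited form of Han's identity matches the normalization above after dividing hooks by $t$; any discrepancy in how the parameter is scaled propagates through the differentiation, so pinning down the precise statement being invoked is the crucial bookkeeping.
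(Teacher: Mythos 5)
Your proposal is correct and takes essentially the same route as the paper: both specialize Han's refinement of Nekrasov--Okounkov (your parameter $s=tw$ is just a rescaling of the paper's $y=1$ case of Theorem~\ref{HanTheorem}), extract the linear term in the deformation parameter---your derivative at $s=0$ is exactly the paper's comparison of coefficients of $w$---and then identify the resulting logarithmic/Lambert series $\sum_{n\geq 1}\frac{q^{tn}}{n(1-q^{tn})}=-\sum_{n\geq 1}\log(1-q^{tn})$ with $\mathcal{E}(tz)$. The only differences are cosmetic additions on your side (the sketch of the $t$-core/$t$-quotient proof of Han's identity, which the paper simply cites, and the formal power series caveats), so nothing further is needed.
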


\begin{remark}
In a follow-up to the present note, the author, Bringmann, and Wagner \cite{OW} will describe a general framework for
Eichler integrals of arbitrary weight Eisenstein series as $q$-brackets of ``weighted'' $t$-hook functions on partitions.
These results will include results where these  $q$-brackets are completed to obtain harmonic Maass forms, sesquiharmonic Maass forms (in the case of $E_2$), and holomorphic quantum modular forms.

\end{remark}

We now turn to the problem of determining the modularity properties of these $q$-brackets.
Using the crucial functions $P_t(z)$ and $L_t(z)$ defined by
\begin{equation}
P_t(z):=-t\left( t+\frac{\pi i}{12}\right)z +\frac{1}{z} \ \ \ \ \ {\text {\rm and}}\ \ \ \ \
L_t(z):=-\frac{1}{4}\cdot \log(tz),
\end{equation}
we define
\begin{equation}
M_t(z):=P_t(z)+L_t(z)+\langle f_t\rangle_q.
\end{equation}
These functions enjoy weight 0 modularity properties with respect to translation
$z\rightarrow z+1$ and inversion $z\rightarrow -1/t^2z.$

\begin{theorem}\label{Theorem2}
If $t$ is a positive integer, then the following are true for all $z\in \mathbb{H}.$

\begin{enumerate}
\item We have that
$$
M_t(z+1)-M_t(z)=-t\left(t+\frac{\pi i}{12}\right)-\frac{1}{z(z+1)}+\frac{1}{4}\log\left(\frac{z}{z+1}\right).
$$
\item We have that
$$
M_t(z)=M_t\left(-\frac{1}{t^2z}\right).
$$
\end{enumerate}
\end{theorem}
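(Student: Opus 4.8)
The plan is to reduce both parts to the single transformation law of the modified Eichler integral $\mathcal{E}(z)$, exploiting the identity $\langle f_t\rangle_q=\mathcal{E}(tz)$ from Theorem~\ref{Theorem1} and the fact that $P_t$ and $L_t$ are tailored to absorb the ``period'' terms that arise. For part (1), the key observation is that $\mathcal{E}(tz)=\sum_{n\geq1}\sigma_{-1}(n)e^{2\pi i ntz}$ is a power series in $q^t=e^{2\pi i tz}$, hence invariant under $z\mapsto z+1$ (here $t\in\Z^{+}$ and $n\in\Z^{+}$ force $e^{2\pi i nt}=1$). Thus $M_t(z+1)-M_t(z)=\bigl(P_t(z+1)-P_t(z)\bigr)+\bigl(L_t(z+1)-L_t(z)\bigr)$, and I would compute these two differences directly: the linear-plus-reciprocal shape of $P_t$ gives $-t(t+\tfrac{\pi i}{12})-\tfrac{1}{z(z+1)}$, while $L_t(z+1)-L_t(z)=\tfrac14\log\bigl(\tfrac{z}{z+1}\bigr)$ (the factor $t$ inside the logarithm cancels, modulo routine branch care). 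Summing these yields the claimed formula.

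Part (2) is the substance, and the first step is to pin down the behavior of $\mathcal{E}$ under $z\mapsto-1/z$. Although this is exactly the law determined by Berndt \cite{Berndt}, I would re-derive it from the material already in hand: differentiating the defining series gives $\mathcal{E}'(z)=\tfrac{\pi i}{12}\bigl(1-E_2(z)\bigr)$, and combining this with the quasimodular transformation $E_2(-1/z)=z^2E_2(z)-\tfrac{6iz}{\pi}$ shows that $\tfrac{d}{dz}\mathcal{E}(-1/z)-\mathcal{E}'(z)=\tfrac{\pi i}{12z^2}-\tfrac{1}{2z}-\tfrac{\pi i}{12}$. Integrating and fixing the constant at the fixed point $z=i$ (where $-1/z=z$) yields
\begin{equation*}
\mathcal{E}\!\left(-\frac1z\right)=\mathcal{E}(z)-\frac{\pi i}{12}\left(z+\frac1z\right)-\frac12\log z+\frac{\pi i}{4}.
\end{equation*}

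With this law in place, I would set $w=tz$ and substitute $z\mapsto-1/(t^2z)$ throughout $M_t$. Then $\langle f_t\rangle_q$ becomes $\mathcal{E}\bigl(t\cdot(-1/(t^2z))\bigr)=\mathcal{E}(-1/(tz))$, to which the displayed law applies with $z$ replaced by $w=tz$; meanwhile $P_t(-1/(t^2z))$ unwinds to $\tfrac1z+\tfrac{\pi i}{12tz}-t^2z$ and $L_t(-1/(t^2z))=-\tfrac14\log(-1/(tz))$. Using the principal-branch identity $\log(-1/w)=-\log w+\pi i$ valid for $w\in\mathbb{H}$ (both $tz$ and $-1/(tz)$ lie in $\mathbb{H}$), the logarithm contributes $\tfrac14\log(tz)-\tfrac{\pi i}{4}$. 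The crux is then a bookkeeping check: the $\tfrac{\pi i}{12tz}$ terms from $P_t$ and from the period function cancel, the reciprocal term $\tfrac1z$ survives, the $\log(tz)$ terms combine to $-\tfrac14\log(tz)=L_t(z)$, the linear terms reassemble as $-t(t+\tfrac{\pi i}{12})z$, and the two constants $+\tfrac{\pi i}{4}$ (from $\mathcal{E}$) and $-\tfrac{\pi i}{4}$ (from the logarithm) cancel --- leaving exactly $M_t(z)$.

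I expect the only real obstacle to be the careful tracking of the additive constant and the branch of the logarithm; everything else is linear algebra in the elementary functions $z$, $1/z$, and $\log z$. The functions $P_t$ and $L_t$ are evidently reverse-engineered so that the quasi-period of $E_2$ (the $z$ and $1/z$ terms) and the weight-$0$ Eichler cocycle (the $\log$ and constant terms) are cancelled precisely; the one place where an error could silently enter is the value $\tfrac{\pi i}{4}$, which must be the \emph{same} constant produced by the fixed-point normalization in the derivation of the $\mathcal{E}$-law and by the branch identity $\log(-1/w)=-\log w+\pi i$. Confirming that these two occurrences of $\tfrac{\pi i}{4}$ agree is the heart of the verification.
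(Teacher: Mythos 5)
Your proposal is correct, and it follows the paper's overall decomposition: part (1) via the $q$-series invariance of $\mathcal{E}(tz)$ under $z\mapsto z+1$ plus a direct computation with $P_t$ and $L_t$, and part (2) by feeding an inversion law for $\mathcal{E}$ into $M_t$ and checking the cancellation. The one genuine difference is the provenance of that inversion law. The paper simply cites Berndt's result (Theorem~\ref{BerndtTheorem}), namely $\mathcal{E}(z)-\mathcal{E}(-1/z)=-\Psi(z)$ with $z\mapsto tz$, and then verifies $M_t(z)-M_t(-1/t^2z)=0$ in a single displayed computation. You instead re-derive the law by the Eichler-integral method: from $\mathcal{E}'(z)=\frac{\pi i}{12}\left(1-E_2(z)\right)$ and $E_2(-1/z)=z^2E_2(z)-\frac{6iz}{\pi}$ you obtain $\frac{d}{dz}\mathcal{E}(-1/z)-\mathcal{E}'(z)=\frac{\pi i}{12z^2}-\frac{1}{2z}-\frac{\pi i}{12}$, integrate on the simply connected domain $\mathbb{H}$, and fix the constant at the fixed point $z=i$. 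Your resulting identity
$$
\mathcal{E}(-1/z)=\mathcal{E}(z)-\frac{\pi i}{12}\left(z+\frac{1}{z}\right)-\frac{1}{2}\log z+\frac{\pi i}{4}
$$
is exactly Berndt's, since $-\pi i\cdot\frac{z^2-3z+1}{12z}=-\frac{\pi i}{12}\left(z+\frac{1}{z}\right)+\frac{\pi i}{4}$; so the constant $\frac{\pi i}{4}$ you flag as the crux does agree with the one produced by the branch identity $\log(-1/w)=-\log w+\pi i$ on $\mathbb{H}$, and your subsequent bookkeeping (cancellation of the $\frac{\pi i}{12tz}$ terms and of the constants, recombination of the logarithms and linear terms) checks out against the paper's computation. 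This is precisely the route the paper alludes to in Section~\ref{NutsAndBolts} (``computed using the method of Eichler integrals'') but does not carry out. What your version buys is self-containedness --- the only external input is the standard quasimodular transformation of $E_2$ --- at the cost of the differentiation/integration/normalization detour; the paper's citation of Berndt keeps the whole proof to a few lines.
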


\begin{TwoRemarks} \ \newline
\noindent
1) We use the branch of $\log z$ with $-\pi \leq \arg(z) <\pi$
(resp. $\sqrt{z}$ that is positive on $\R^{+}$).

\noindent
2) Obviously, $\langle f_t\rangle_q=\mathcal{E}(tz)$ is invariant under $z\rightarrow z+1.$ The definition of
$M_t(z)$ is motivated by the desire to obtain the more difficult invariance under the inversion $z\rightarrow -1/t^2 z$.
\end{TwoRemarks}

Theorem~\ref{Theorem2} implies modularity properties for the individual series $H_t(z)$ defined in (\ref{Ht}).
We modify these generating functions to define
\begin{equation}
H_t^*(z):=q^{-\frac{1}{24}}\cdot H_t(z)=\sum_{\lambda\in \mathcal{P}}f_t(\lambda)q^{|\lambda|-\frac{1}{24}},
\end{equation}
so that $\langle f_t\rangle_q=\eta(z)\cdot H_t^*(z).$ This allows us to make use of the modularity
of Dedekind's eta-function. To ease notation, we define the auxiliary function
\begin{equation}\label{Psi}
\Psi(z):=-\pi i \cdot \left(\frac{z^2-3z+1}{12z}\right)-\frac{1}{2}\log(z).
\end{equation}
For $H_1^*(z),$ these transformation laws  are described in terms of $\Psi(z)$ and Dedekind's $\eta(-1/z).$

\begin{corollary}\label{H1star}
If $z\in \mathbb{H}$, then the following are true.

\begin{enumerate}
\item We have that
$$
H_1^*(z+1)=e^{-\frac{\pi i}{12}}\cdot H_1^*(z).
$$
\item We have that
$$
H_1^*(-1/z)-\frac{1}{\sqrt{-iz}}\cdot H_1^*(z)=\frac{\Psi(z)}{\eta(-1/z)}.
$$
\end{enumerate}
\end{corollary}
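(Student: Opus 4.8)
The plan is to deduce both transformation laws for $H_1^*$ from Theorem~\ref{Theorem2} specialized to $t=1$, together with the relations $\langle f_1\rangle_q=\mathcal{E}(z)$ (Theorem~\ref{Theorem1}) and $\langle f_1\rangle_q=\eta(z)\,H_1^*(z)$. The first claim is the elementary one: since $H_1(z)$ is a power series in $q=e^{2\pi i z}$ it is invariant under $z\mapsto z+1$, whereas the prefactor $q^{-1/24}=e^{-\pi i z/12}$ in the definition of $H_1^*$ picks up the factor $e^{-\pi i/12}$. Hence $H_1^*(z+1)=e^{-\pi i/12}H_1^*(z)$. (Equivalently, one may combine $\mathcal{E}(z+1)=\mathcal{E}(z)$ with $\eta(z+1)=e^{\pi i/12}\eta(z)$ in $H_1^*=\mathcal{E}/\eta$.)

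For the inversion law I would begin from Theorem~\ref{Theorem2}(2) with $t=1$, which reads $M_1(z)=M_1(-1/z)$ because $t^2=1$. Writing $M_1(z)=P_1(z)+L_1(z)+\mathcal{E}(z)$ and isolating the Eichler-integral terms gives
$$\mathcal{E}(-1/z)-\mathcal{E}(z)=\bigl(P_1(z)-P_1(-1/z)\bigr)+\bigl(L_1(z)-L_1(-1/z)\bigr).$$
The next step is to evaluate the right-hand side explicitly. A direct computation yields $P_1(z)-P_1(-1/z)=-\tfrac{\pi i}{12}\bigl(z+\tfrac1z\bigr)$, while the logarithmic piece is $L_1(z)-L_1(-1/z)=\tfrac14\bigl(\log(-1/z)-\log(z)\bigr)$. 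The goal is to recognize the sum as $\Psi(z)$ from (\ref{Psi}); expanding $-\pi i\cdot\frac{z^2-3z+1}{12z}=-\frac{\pi i}{12}\bigl(z+\tfrac1z\bigr)+\frac{\pi i}{4}$ shows this will work provided $\log(-1/z)-\log(z)=\pi i-2\log(z)$. Granting that identity, one obtains the clean relation $\mathcal{E}(-1/z)-\mathcal{E}(z)=\Psi(z)$.

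Finally I would convert this into the stated form. Substituting $\mathcal{E}(z)=\eta(z)H_1^*(z)$ and $\mathcal{E}(-1/z)=\eta(-1/z)H_1^*(-1/z)$ turns the relation into $\eta(-1/z)H_1^*(-1/z)-\eta(z)H_1^*(z)=\Psi(z)$; dividing by $\eta(-1/z)$ and using $\eta(-1/z)=\sqrt{-iz}\,\eta(z)$ to simplify $\eta(z)/\eta(-1/z)=1/\sqrt{-iz}$ gives exactly claim (2).

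The main obstacle is the branch bookkeeping in the logarithm. With the prescribed branch $-\pi\le\arg<\pi$, any $z\in\mathbb{H}$ has $\arg(z)\in(0,\pi)$, and writing $z=re^{i\theta}$ one finds $-1/z=r^{-1}e^{i(\pi-\theta)}$ with $\pi-\theta\in(0,\pi)$ again in the principal range. This is precisely what forces $\log(-1/z)-\log(z)=\pi i-2\log(z)$ with no stray multiple of $2\pi i$, and it is exactly this constant $\pi i$ and the surviving $-\tfrac12\log(z)$ that produce the $\tfrac{\pi i}{4}$ and logarithmic terms of $\Psi(z)$. One should likewise verify that $-iz$ lies in the right half-plane so that $\sqrt{-iz}$ is the intended branch in the eta-transformation; both checks hinge only on $z\in\mathbb{H}$.
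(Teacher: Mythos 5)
Your proposal is correct and follows essentially the same route as the paper: both parts are deduced from Theorem~\ref{Theorem2}(2) at $t=1$, combined with $\eta(-1/z)=\sqrt{-iz}\,\eta(z)$, the nonvanishing of $\eta$ on $\mathbb{H}$, and the computation that $P_1(z)+L_1(z)-P_1(-1/z)-L_1(-1/z)=\Psi(z)$. Your explicit branch-of-logarithm verification is a welcome detail that the paper leaves implicit, but it is the same argument.
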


These results imply a Chowla-Selberg type result, generalizing the classical fact \cite{ChowlaSelberg,vW} that weight $k$ algebraic modular forms
evaluated at discriminant $D<0$ points $\tau$ are algebraic multiples of $\Omega_D^k,$ the $k$th power of a canonical
period $\Omega_D.$
To make this precise, we let $\overline{\mathbb{Q}}$ denote the algebraic closure of the field of rational numbers.  Suppose that $D<0$ is the fundamental discriminant of the imaginary quadratic field $\mathbb{Q}(\sqrt{D})$.  Let $h(D)$ denote the class number of $\mathbb{Q}(\sqrt{D})$, and define $h'(D):=1/3$ (resp. $1/2$) when $D=-3$ (resp. $-4$), and $h'(D):=h(D)$ when $D<-4$.  We
then have the canonical period
\begin{equation}
\Omega_D:=\frac{1}{\sqrt{2\pi|D|}}\left(\prod_{j=1}^{|D|-1}\Gamma\left(\frac{j}{|D|}\right)^{\chi_D(j)}\right)^{\frac{1}{2h'(D)}},
\end{equation}
where $\chi_D(\bullet):=\left(\frac{D}{\bullet}\right)$. 

The Chowla-Selberg phenomenon concerns modular forms $f(z)$ with algebraic Fourier coefficients.
If $f(z)$ has weight $k\in \Z$ and $\tau\in\mathbb{H}\cap\mathbb{Q}(\sqrt{D}),$ then
\begin{equation}\label{CSTheorem}
 f(\tau)\in\overline{\mathbb{Q}}\cdot\Omega_D^k.
 \end{equation}
 We obtain the following generalization of this phenomenon for $H_1^*(-1/z)-\frac{1}{\sqrt{-iz}}\cdot H_1^*(z),$
 which is somewhat analogous to similar results obtained by Dawsey and the author \cite{DO} in a different partition theoretic
 setting.

\begin{corollary}\label{ChowlaSelbergResult}
If $\tau\in \Q(\sqrt{D})\cap \mathbb{H},$ where $D<0$ is a fundamental discriminant, then
$$
H_1^*(-1/\tau)-\frac{1}{\sqrt{-i\tau}}\cdot H_1^*(\tau)
\in \overline{\Q}\cdot \frac{\Psi(\tau)}{\sqrt{\Omega_D}}.
$$
\end{corollary}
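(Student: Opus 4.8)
The plan is to reduce the claim directly to the transformation law in Corollary~\ref{H1star} together with the classical Chowla-Selberg phenomenon \eqref{CSTheorem} applied to a single integer-weight modular form. First I would apply Corollary~\ref{H1star}(2) at $z=\tau$, which identifies the left-hand side exactly:
$$
H_1^*(-1/\tau)-\frac{1}{\sqrt{-i\tau}}\cdot H_1^*(\tau)=\frac{\Psi(\tau)}{\eta(-1/\tau)}.
$$
Thus the entire assertion collapses to showing that
$$
\frac{1}{\eta(-1/\tau)}\in \overline{\Q}\cdot \frac{1}{\sqrt{\Omega_D}},
$$
equivalently that $\eta(-1/\tau)\in \overline{\Q}\cdot \sqrt{\Omega_D}$. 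The transcendental factor $\Psi(\tau)$ appears identically on both sides and simply cancels, which is precisely what makes the statement tractable despite $\Psi$ being non-modular.

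Next I would verify that the point $-1/\tau$ is again an admissible CM point. Writing $\tau=a+b\sqrt{D}$ with $a,b\in\Q$, one has $\tau\overline{\tau}=a^2+|D|b^2\in\Q^{\times}$, so $-1/\tau=-\overline{\tau}/(\tau\overline{\tau})\in \Q(\sqrt{D})$, and $-1/\tau\in\HH$ since $\tau\in\HH$. Hence $-1/\tau\in \Q(\sqrt{D})\cap\HH$ is itself a discriminant-$D$ point, so \eqref{CSTheorem} is available there.

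The key step is to convert the integer-weight Chowla-Selberg statement into the half-integral conclusion for $\eta$. I would apply \eqref{CSTheorem} to the weight $12$ cusp form $\Delta(z)=\eta(z)^{24}$, whose Fourier coefficients are rational integers, at the point $-1/\tau$. This yields $\Delta(-1/\tau)=\eta(-1/\tau)^{24}\in\overline{\Q}\cdot \Omega_D^{12}$, that is, $\eta(-1/\tau)^{24}/\Omega_D^{12}\in\overline{\Q}$. Since $\overline{\Q}$ is closed under extraction of roots, the relation $\left(\eta(-1/\tau)/\sqrt{\Omega_D}\right)^{24}\in\overline{\Q}$ forces $\eta(-1/\tau)/\sqrt{\Omega_D}\in\overline{\Q}$, so that $\eta(-1/\tau)\in\overline{\Q}\cdot\sqrt{\Omega_D}$ as required; any ambiguity in the branch of $\sqrt{\Omega_D}$ is absorbed into the algebraic factor. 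Combining this with the displayed identity completes the proof. The only genuine subtlety, and the step I expect to require the most care, is this passage from weight $12$ to weight $1/2$: the classical phenomenon \eqref{CSTheorem} is stated for $k\in\Z$, so the half-integral weight of $\eta$ must be handled by passing through $\eta^{24}$ and taking a $24$th root rather than by a direct application.
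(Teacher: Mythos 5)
Your proposal is correct and follows essentially the same route as the paper: apply Corollary~\ref{H1star}(2) to identify the left-hand side as $\Psi(\tau)/\eta(-1/\tau)$, and then use the Chowla--Selberg phenomenon to conclude that $\eta(-1/\tau)\in\overline{\Q}\cdot\sqrt{\Omega_D}$. The only difference is that the paper invokes \eqref{CSTheorem} directly for $\eta$ ``since the Dedekind eta-function has weight $1/2$,'' whereas you justify that step rigorously by passing through $\Delta=\eta^{24}$ (weight $12$, integral coefficients, nonvanishing on $\mathbb{H}$) at the CM point $-1/\tau$ and extracting a $24$th root---a worthwhile refinement, since \eqref{CSTheorem} as stated covers only integer weight.
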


\begin{remark} Corollary~\ref{ChowlaSelbergResult} is a special case of the general fact that 
$$
\alpha_t(\tau) H_t^*(-1/t^2 \tau)-\frac{\beta_t(\tau)}{\sqrt{-i\tau}}\cdot H_t^*(\tau)
\in \overline{\Q}\cdot \frac{\Psi(t\tau)}{\sqrt{\Omega_D}},
$$
where $\alpha_t(\tau):=\eta(-1/t^2\tau)/ \sqrt{\Omega_D}$ and $\beta_t(\tau):=\eta(-1/\tau)/\sqrt{\Omega_D}$ are both algebraic.
\end{remark}

To obtain these results, we make use of recent work of Han \cite{Han} on extensions of the Nekrasov-Okounkov formula, and work of Berndt \cite{Berndt}
on modular transformation properties for generalized Lambert series. These results are recalled in Section~\ref{NutsAndBolts},
and in Section~\ref{Proofs} we prove the theorems and corollaries described above.

\section*{Acknowledgements}\noindent  The author thanks Madeline Locus Dawsey, Wei-Lun Tsai, Ian Wagner, and Ole Warnaar for their comments on an earlier draft of this note.

\section{Nuts and Bolts}\label{NutsAndBolts}

Here we recall important work by Berndt and Han which are the critical ingredients in the proofs of the results obtained in this note.

\subsection{A formula of Han}

Han recently expanded and refined the Nekrasov-Okounkov product formula. 
Theorem~1.3 of \cite{Han} offers the striking refinement we require.

\begin{theorem}{\text {\rm (Han, 2008)}}\label{HanTheorem}
We have that
$$
\sum_{\lambda\in \mathcal{P}}q^{|\lambda|}\prod_{h\in \mathcal{H}_t(\lambda)} \left(y-\frac{tyw}{h^2}\right)=\prod_{n=1}^{\infty}\frac{(1-q^{tn})^t}{(1-(yq^t)^n)^{t-w}(1-q^n)}.
$$
\end{theorem}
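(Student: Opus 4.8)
The plan is to deduce this identity from the Nekrasov--Okounkov formula (which is available here in the equivalent form $\sum_{\lambda}q^{|\lambda|}\prod_{h\in\mathcal{H}(\lambda)}(1-s/h^2)=\prod_{n\geq1}(1-q^n)^{s-1}$ recorded in the introduction via the Nekrasov--Okounkov/Westbury reformulation) by means of the classical $t$-core/$t$-quotient decomposition of partitions. The governing idea is that both the $q$-weight and the multiset of $t$-hooks are controlled very cleanly by this decomposition, so that the left-hand sum factors as a $t$-core generating function times $t$ independent Nekrasov--Okounkov sums.

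Concretely, first I would invoke the bijection (see Chapter 2 of \cite{KerberJames}) sending each $\lambda\in\mathcal{P}$ to the pair consisting of its $t$-core $\mu$ and its $t$-quotient $(\lambda^{(0)},\dots,\lambda^{(t-1)})$, a $t$-tuple of arbitrary partitions. Two features are essential: the size formula $|\lambda|=|\mu|+t\sum_{i=0}^{t-1}|\lambda^{(i)}|$, and the hook correspondence asserting that the multiset $\{h/t:h\in\mathcal{H}_t(\lambda)\}$ equals the disjoint union $\bigsqcup_{i=0}^{t-1}\mathcal{H}(\lambda^{(i)})$; in particular $\#\mathcal{H}_t(\lambda)=\sum_i|\lambda^{(i)}|$. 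Granting this, each $t$-hook $h=th'$ contributes $y-tyw/(th')^2=y(1-w/(th'^2))$, so
\[
\prod_{h\in\mathcal{H}_t(\lambda)}\left(y-\frac{tyw}{h^2}\right)=y^{\sum_i|\lambda^{(i)}|}\prod_{i=0}^{t-1}\prod_{h'\in\mathcal{H}(\lambda^{(i)})}\left(1-\frac{w}{th'^2}\right).
\]

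Next, I would substitute this together with $q^{|\lambda|}=q^{|\mu|}\prod_i q^{t|\lambda^{(i)}|}$ into the left-hand side and use the independence of $\mu$ and of the quotient components to factor the sum as
\[
\left(\sum_{\mu\ t\text{-core}}q^{|\mu|}\right)\prod_{i=0}^{t-1}\left(\sum_{\lambda^{(i)}\in\mathcal{P}}(yq^t)^{|\lambda^{(i)}|}\prod_{h'\in\mathcal{H}(\lambda^{(i)})}\left(1-\frac{w/t}{h'^2}\right)\right).
\]
Each inner sum is a Nekrasov--Okounkov sum with parameter $s=w/t$ and with $q$ replaced by $yq^t$, hence equals $\prod_{n\geq1}(1-(yq^t)^n)^{w/t-1}$, and the product over the $t$ components is $\prod_n(1-(yq^t)^n)^{w-t}=\prod_n(1-(yq^t)^n)^{-(t-w)}$. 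For the $t$-core factor I would not cite a separate identity but derive it on the spot: applying the same bijection to Euler's formula \eqref{Euler} gives $\prod_n(1-q^n)^{-1}=\big(\sum_\mu q^{|\mu|}\big)\cdot\prod_n(1-q^{tn})^{-t}$, whence $\sum_\mu q^{|\mu|}=\prod_n(1-q^{tn})^t/(1-q^n)$. Multiplying the two factors yields precisely the claimed right-hand side. Throughout, the identity is read as a formal power series in $q$ whose coefficients are polynomials in $y$ and $w$ (the non-integer exponent $t-w$ interpreted via the binomial series), which is the same sense in which the Nekrasov--Okounkov formula is applied.

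The main obstacle is the hook correspondence: that dividing a $t$-hook length by $t$ gives a bijection between $\mathcal{H}_t(\lambda)$ and the hooks of the $t$-quotient. The size formula and the bijection itself are routine, but this finer statement about hook \emph{lengths} is exactly what forces the Nekrasov--Okounkov factors to appear with argument $w/t$ and with the dilation $q\mapsto yq^t$. I would establish it through the abacus (beta-set) model: a hook of length $h$ corresponds to moving a bead up by $h$ positions into a vacant slot, and such a move has length divisible by $t$ precisely when it stays on a single runner of the $t$-runner abacus, where it becomes an ordinary hook of length $h/t$ in the partition $\lambda^{(i)}$ encoded by that runner. Once this correspondence is in hand, the remainder is the bookkeeping above.
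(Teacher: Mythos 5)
Your proposal is correct, but note that the paper itself contains no proof of this statement: it is imported wholesale as Theorem~1.3 of Han's paper \cite{Han} (the ``Nuts and Bolts'' section simply records it as an external ingredient). What you have written is therefore a genuine proof of a result the paper only cites, and it is essentially the standard derivation---and close in spirit to Han's own argument---namely: combine the Nekrasov--Okounkov formula $\sum_{\lambda}q^{|\lambda|}\prod_{h\in\mathcal{H}(\lambda)}(1-s/h^2)=\prod_{n\ge 1}(1-q^n)^{s-1}$ with the classical $(t\text{-core},t\text{-quotient})$ bijection of \cite{KerberJames}, whose two key features (the size formula and the fact that $t$-hooks of $\lambda$, divided by $t$, are exactly the hooks of the quotient components) you state and use correctly. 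Your bookkeeping checks out: the substitution $h=th'$ turns each factor into $y\bigl(1-(w/t)/h'^2\bigr)$, the $t$ quotient sums each become $\prod_n(1-(yq^t)^n)^{w/t-1}$ via Nekrasov--Okounkov with $s=w/t$ and $q\mapsto yq^t$, and your on-the-spot derivation of the $t$-core generating function $\prod_n(1-q^{tn})^t/(1-q^n)$ from Euler's product is the cleanest way to get that factor; multiplying gives exactly Han's right-hand side. Two small points worth making explicit if you write this up: (i) the identity $\bigl[(1-Q)^{w/t-1}\bigr]^t=(1-Q)^{w-t}$ for non-integer exponents should be justified at the level of formal binomial series (each $q$-coefficient on both sides is a polynomial in $w$, so it suffices to check it for $w\in t\Z$, where it is literal); and (ii) the hook-division lemma is the one real input---your abacus sketch is the standard proof, but you could equally cite it directly from Chapter~2 of \cite{KerberJames} rather than reprove it.
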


\begin{remark}
Han's formula can be reformulated in terms of the $t$-hook function
$$
F_{t,y,w}(\lambda):=\prod_{h\in \mathcal{H}_t(\lambda)} \left(y-\frac{tyw}{h^2}\right).
$$
Theorem~\ref{HanTheorem} is equivalent to the identity
$$
\langle F_{t,y,w}\rangle_q=\prod_{n=1}^{\infty}\frac{(1-q^{tn})^t}{(1-(yq^t)^n)^{t-w}}.
$$
\end{remark}

\subsection{A formula of Berndt}

Using the modularity of the weight 2 nonholomorphic Eisenstein series $E_2^*(z),$ one knows that
$\mathcal{E}(z)$ possesses weight 0 type modular transformation laws which can be computed using the method
of Eichler integrals (for example, see Section 1.4 of \cite{BFOR}). It turns out that Berndt
\cite{Berndt} has previously determined the modular transformation properties we require.
Here we offer a slight reformulation of the $m=0$ case of
Theorem~2.2 of \cite{Berndt}, the key transformation property of $\mathcal{E}(z)$.

\begin{theorem}\label{BerndtTheorem}{\text {\rm (Berndt, 1977)}}
If $z\in \mathbb{H}$, then
$$
\mathcal{E}(z)-\mathcal{E}(-1/z)=-\Psi(z).
$$
\end{theorem}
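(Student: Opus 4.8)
The plan is to prove the identity by differentiation, reducing it to the classical transformation law of $E_2(z)$. First I would differentiate $\mathcal{E}(z)=\sum_{n\geq 1}\frac{\sigma_1(n)}{n}q^n$ term by term, using $\frac{d}{dz}q^n=2\pi i n q^n$ together with (\ref{simple}), to obtain
$$
\mathcal{E}'(z)=2\pi i\sum_{n\geq 1}\sigma_1(n)q^n=\frac{\pi i}{12}\left(1-E_2(z)\right).
$$
This exhibits $\mathcal{E}(z)$ as an Eichler integral of the weight $2$ quasimodular form $\tfrac{1}{24}(1-E_2(z))$, and it is precisely the failure of $E_2$ to be modular that will produce the correction term $-\Psi(z)$.

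Next I would differentiate the left-hand side $\phi(z):=\mathcal{E}(z)-\mathcal{E}(-1/z)$. By the chain rule $\frac{d}{dz}\mathcal{E}(-1/z)=\frac{1}{z^2}\mathcal{E}'(-1/z)$, and substituting the formula above together with the transformation law $E_2(-1/z)=z^2E_2(z)-\frac{6iz}{\pi}$ gives
$$
\frac{d}{dz}\mathcal{E}(-1/z)=\frac{\pi i}{12z^2}-\frac{\pi i}{12}E_2(z)-\frac{1}{2z}.
$$
The key point is that upon forming $\phi'(z)=\mathcal{E}'(z)-\frac{d}{dz}\mathcal{E}(-1/z)$ the two $E_2(z)$ terms cancel exactly, leaving the elementary expression
$$
\phi'(z)=\frac{\pi i}{12}\left(1-\frac{1}{z^2}\right)+\frac{1}{2z}.
$$

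Finally I would check that $-\Psi(z)$ has the same derivative. Writing $\frac{z^2-3z+1}{12z}=\frac{1}{12}(z-3+z^{-1})$ makes this immediate, since then $\frac{d}{dz}(-\Psi(z))=\frac{\pi i}{12}(1-z^{-2})+\frac{1}{2z}$, matching $\phi'(z)$. Hence $\phi(z)+\Psi(z)$ is constant on the simply connected domain $\mathbb{H}$, where the principal branch of $\log$ is holomorphic. To pin down the constant I would evaluate at the fixed point $z=i$ of $z\mapsto -1/z$: there $\phi(i)=\mathcal{E}(i)-\mathcal{E}(i)=0$, while a direct computation gives $-\Psi(i)=-\frac{\pi i}{4}+\frac{1}{2}\log i=-\frac{\pi i}{4}+\frac{\pi i}{4}=0$, so the additive constant vanishes and the identity $\mathcal{E}(z)-\mathcal{E}(-1/z)=-\Psi(z)$ follows on all of $\mathbb{H}$.

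The substantive obstacle is really only the bookkeeping in the second step: one must verify that the quasimodular anomaly $-\frac{6iz}{\pi}$ in the $E_2$ transformation contributes exactly the $-\frac{1}{2z}$ and $\frac{\pi i}{12z^2}$ terms that integrate into the logarithmic and rational parts of $\Psi$. The cancellation of the $E_2$ terms is automatic once the sign conventions are consistent, so no hard analysis is required; the only remaining care concerns the branch of $\log z$, which is what forces the integration constant to equal $0$ rather than a nonzero multiple of $2\pi i$.
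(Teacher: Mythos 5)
Your proof is correct, but note that the paper does not actually prove this theorem at all: it is imported by citation, as a slight reformulation of the $m=0$ case of Theorem~2.2 of Berndt's 1977 paper, where Berndt derives it from general transformation formulae for generalized Lambert series under arbitrary modular substitutions. Your argument is therefore a genuinely different, self-contained route: the standard Eichler-integral device. From (\ref{simple}) and (\ref{E}) you get $\mathcal{E}'(z)=\frac{\pi i}{12}\left(1-E_2(z)\right)$; the chain rule together with the quasimodular law $E_2(-1/z)=z^2E_2(z)-\frac{6iz}{\pi}$ makes the $E_2$-terms cancel in $\phi'(z)$, where $\phi(z)=\mathcal{E}(z)-\mathcal{E}(-1/z)$, leaving $\phi'(z)=\frac{\pi i}{12}\left(1-z^{-2}\right)+\frac{1}{2z}$, which indeed equals $\frac{d}{dz}\left(-\Psi(z)\right)$ after writing $\frac{z^2-3z+1}{12z}=\frac{1}{12}\left(z-3+z^{-1}\right)$. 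Since $\mathbb{H}$ is connected and both sides are holomorphic there (the principal branch of $\log$ has its cut on the negative reals, disjoint from $\mathbb{H}$), the difference is constant, and your evaluation at the fixed point $z=i$ pins it to zero: $\phi(i)=0$ and $\Psi(i)=-\frac{\pi i}{12}(i-3-i)-\frac{1}{2}\log i=\frac{\pi i}{4}-\frac{\pi i}{4}=0$. All computations check. Two minor points worth making explicit: term-by-term differentiation is justified by locally uniform convergence of the $q$-series (immediate, since $|q|<1$ uniformly on compacta of $\mathbb{H}$), and your use of the $E_2$ transformation law introduces no circularity within this paper, since that law is classical and equivalent to the $\eta$-inversion formula the paper invokes elsewhere. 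As for what each approach buys: your derivation makes the result self-contained and exhibits $\Psi$ transparently as the integral of the quasimodular anomaly of $E_2$, whereas the citation to Berndt accesses a much more general theorem (arbitrary modular substitutions, general Lambert-type series) of which this identity is a very special case.
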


\section{Proofs of Results}\label{Proofs}
Here we employ the work of Berndt and Han to prove our results. 

\begin{proof}[Proof of Theorem~\ref{Theorem1}]
Letting $y=1$ in Theorem~\ref{HanTheorem}, we find that
\begin{equation}\label{formula}
\sum_{\lambda\in \mathcal{P}}q^{|\lambda|}\prod_{h\in \mathcal{H}_t(\lambda)} \left(1-\frac{tw}{h^2}\right)=\prod_{n=1}^{\infty}\frac{1}{(1-q^{tn})^{-w}(1-q^n)}.
\end{equation}
We recall the classical identity for Euler's partition generating function
$$
\sum_{n=0}^{\infty}p(n)q^n=\prod_{n=1}^{\infty}\frac{1}{1-q^n}=\exp\left(
\sum_{n=1}^{\infty}\frac{q^n}{n(1-q^n)}\right).
$$
Applying this identity to the factor in (\ref{formula}) with exponent $-w$, we find that
$$
\sum_{\lambda \in \mathcal{P}}q^{|\lambda|}\prod_{h\in \mathcal{H}_t(\lambda)} \left(1-\frac{tw}{h^2}\right)=
\prod_{n=1}^{\infty}\frac{1}{1-q^n}\cdot \exp\left(-w\sum_{n=1}^{\infty}\frac{q^{tn}}{n(1-q^{tn})}\right).
$$
By comparing the coefficients in $w,$ we find from (\ref{Ht}) that
$$
H_t(z)=\prod_{n=1}^{\infty}\frac{1}{1-q^n}\cdot \sum_{n=1}^{\infty}\frac{q^{tn}}{n(1-q^{tn})}.
$$
A straightforward calculation shows that this is equivalent to the  claim that
$\langle f_t\rangle_q=\mathcal{E}(tz).$
\end{proof}

\begin{proof}[Proof of Theorem~\ref{Theorem2}]
By Theorem~\ref{Theorem1}, we have that
$$
M_t(z)=P_t(z)+L_t(z)+\mathcal{E}(tz).
$$
By letting $z\rightarrow tz$ in Theorem~\ref{BerndtTheorem}, we have that
$$
\mathcal{E}(tz)-\mathcal{E}(-1/tz)=-\Psi(tz).
$$
Therefore, by direct calculation we find that
\begin{displaymath}
\begin{split}
M_t(z)-M_t(-1/t^2 z)&=(P_t(z)-P_t(-1/t^2z))+(L_t(z)-L_t(-1/t^2z))+(\mathcal{E}(tz)-\mathcal{E}(-1/tz))\\
&=-\pi i\left(\frac{t^2z^2+1}{12tz}\right)+\frac{\pi i}{4}-\frac{1}{2}\log(tz) -\Psi(tz)
=0.
\end{split}
\end{displaymath}
This confirms the second claim.

The series $\mathcal{E}(tz)$ is invariant in $z\rightarrow z+1,$ as it 
 is a power series in $q=e^{2\pi i z}$ with integer exponents. Therefore, we find that
 $$
 M_t(z+1)-M_t(z)=\left( P_t(z+1)+L_t(z+1)\right)- \left(P_t(z)+L_t(z)\right).
 $$
 The first claim follows by direct calculation.
\end{proof}

\begin{proof}[Proof of Corollary~\ref{H1star}]
By definition, we have that
$$
H_1^*(z)=\sum_{\lambda \in \mathcal{P}}f_t(\lambda)q^{|\lambda|-\frac{1}{24}}.
$$
Therefore, the first claim is a triviality.

To establish the second claim, we note that Theorem~\ref{Theorem2}  implies that
$$
M_1(z)=M_1(-1/z).
$$
Using the fact that $\eta(-1/z)=\sqrt{-iz}\cdot \eta(z)$,
we obtain
$$
P_1(z)+L_1(z)+\frac{\eta(-1/z)}{\sqrt{-iz}}\cdot H_1^*(z)=P_1(-1/z)+L_1(-1/z)+\eta(-1/z)H_1^*(-1/z).
$$
Since the $\eta$-function is nonvanishing on $\mathbb{H}$, this is equivalent to the desired conclusion
\begin{displaymath}
\begin{split}
H_1^*(-1/z)-\frac{1}{\sqrt{-iz}}\cdot H_1^*(z)&=\frac{1}{\eta(-1/z)}\cdot \left(P_1(z)+L_1(z)-P_1(-1/z)-L_1(-1/z)\right)\\
&=\frac{\Psi(z)}{\eta(-1/z)}.
\end{split}
\end{displaymath}
\end{proof}

\begin{proof}[Proof of Corollary~\ref{ChowlaSelbergResult}]
By the classical Chowla-Selberg theorem described by (\ref{CSTheorem}), since the Dedekind eta-function has weight 1/2, we have that
$$
\eta(-1/\tau)\in \overline{\Q}\cdot \sqrt{\Omega_D}.
$$
The claimed conclusion is now an immediate consequence of Corollary~\ref{H1star} (2).
\end{proof}

\section{Some examples}
Here we offer examples of the results of this paper.

\begin{example} We illustrate the $t=1$ and $2$ cases of Theorem~\ref{Theorem1}.
By direct calculation, we have
\begin{displaymath}
\begin{split}
H_1(z)&=q+\frac{5}{2}q^2+\frac{29}{6}q^3+\frac{109}{12}q^4+\dots,\\
H_2(z)&=q^2+q^3+\frac{7}{2}q^4+\frac{9}{2}q^5+\dots.
\end{split}
\end{displaymath}
Therefore, one finds that
\begin{displaymath}
\begin{split}
\langle f_1\rangle_q = \prod_{n=1}^{\infty}(1-q^n)\cdot H_1(z)=q+\frac{3}{2}q^2+\frac{4}{3}q^3+\frac{7}{4}q^4+\dots =\mathcal{E}(z)
=\sum_{n=1}^{\infty}\sigma_{-1}(n)q^n,\\
\langle f_2\rangle_q = \prod_{n=1}^{\infty}(1-q^n)\cdot H_2(z)=q^2+\frac{3}{2}q^4+\frac{4}{3}q^6+\frac{7}{4}q^8+\dots =\mathcal{E}(2z)=\sum_{n=1}^{\infty}\sigma_{-1}(n)q^{2n}.
\end{split}
\end{displaymath}

\end{example}

\begin{example}
We illustrate Theorem~\ref{Theorem2} (2) with $t=2$ and $z=i.$
Ramanujan proved (see p. 326 of \cite{Berndt1998}) that
\begin{displaymath}
\begin{split}
\eta(i)&=\frac{\sqrt{2}\pi^{\frac{1}{4}}}{2\cdot \Gamma(3/4)} \approx 0.7682\\
\eta(i/4)&=\frac{2^{\frac{1}{4}}\cdot \sqrt{\pi}}{2\cdot \Gamma(3/4)^2} \approx 0.7018.
\end{split}
\end{displaymath}
Moreover, we find that
$$
H_2^*(i)\approx 4.5395\cdot 10^{-6}\ \ \ {\text {\rm and}}\ \ \ H_2^*(i/4)\approx 0.06572.
$$
Therefore, we find that
\begin{displaymath}
\begin{split}
M_2(i)&=P_2(i)+L_2(i)+\eta(i)H_2^*(i)\approx 0.3503-5.3926 i,\\
M_2(i/4)&=P_2(i/4)+L_2(i/4)+\eta(i/4)H_2^*(i/4)\approx 0.3503-5.3926i.
\end{split}
\end{displaymath}
This illustrates the fact that $M_2(i)=M_2(i/4).$
\end{example}

\begin{example}
We now illustrate Corollary~\ref{H1star} (2) and Corollary~\ref{ChowlaSelbergResult} using $z=\tau=2i.$
By direct calculation, we find that $H_1^*(i/2)\approx 0.05506$ and $H_2^*(2i)\approx 5.8870\cdot 10^{-6}.$ Therefore, we have
\begin{equation}\label{formula2}
H_1^*(i/2)-\frac{\sqrt{2}}{2}\cdot H_1^*(2i)\approx 0.05506.
\end{equation}
Ramanujan proved (see p. 326 of \cite{Berndt1998}) that
$$
\eta(i/2)=2^{\frac{1}{8}}\cdot \sqrt{\Omega_{-4}}=\frac{\pi^{\frac{1}{4}}}{2^{\frac{3}{8}}\cdot \Gamma(3/4)}\approx 0.8377\dots.
$$
By direct calculation, we find that $\Psi(2i)=\frac{\pi}{8}-\frac{\log(2)}{2}\approx 0.04612\dots,$ and so
$$
\frac{\Psi(2i)}{\eta(i/2)}\approx 0.05506\dots.
$$
Combined with (\ref{formula2}), we find that
$$
H_1^*(i/2)-\frac{\sqrt{2}}{2}\cdot H_1^*(2i)=\frac{\Psi(2i)}{\eta(i/2)}=\frac{1}{2^{\frac{1}{8}}}\cdot \frac{\Psi(2i)}{\sqrt{\Omega_{-4}}}.
$$
This illustrates Corollary~\ref{ChowlaSelbergResult}, where the algebraic factor is $1/2^{\frac{1}{8}}.$
\end{example}

\end{document}